\documentclass[12pt, english, a4paper]{amsart}

\usepackage{graphicx}

\usepackage{amsmath,amssymb}
\usepackage{bm}
\usepackage{euscript,color}
\usepackage{babel}
\usepackage{amsthm}
\usepackage{amsfonts}
\usepackage{latexsym}
\usepackage{fancyhdr}

\usepackage[left=2.5cm, top=3cm, bottom=3cm, right=2.5cm]{geometry}

\def\mymathfont{\mathbf}

\newcommand{\R}{\mymathfont{R}}
\newcommand{\C}{\mymathfont{C}}
\newcommand{\Z}{\mymathfont{Z}}

\newcommand{\Int}{\mathop{\mathrm{Int}}\nolimits}

\newcommand{\CP}{\mathop{\mymathfont{C}\mymathfont{P}}\nolimits}

\newcommand{\pt}{\mathop{\mathrm{pt}}\nolimits}

\newcommand{\cs}{\mathop{\#}\nolimits}

\makeatletter
\renewcommand{\section}{\@startsection%
{section}
{1}
{0mm}
{1.5\bigskipamount}
{0.5\bigskipamount}
{\centering\normalsize\sc}}

\renewcommand{\paragraph}{\@startsection%
{paragraph}
{4}
{0mm}
{\bigskipamount}
{-1.25ex}
{\normalsize\sl}}

\def\provedboxcontents#1{$\square$}

\makeatother


\newtheoremstyle{thm}{}{}{\slshape}{}{\scshape}{.}{0.5em}{}

\newtheoremstyle{def}{}{}{}{}{\scshape}{.}{0.5em}{}

\newtheoremstyle{rmk}{}{}{}{}{\scshape}{.}{0.5em}{}

\newtheoremstyle{claim}{}{}{}{}{\slshape}{.}{0.5em}{}

\theoremstyle{thm}
\newtheorem{newstatement}{newstatement}

\newtheorem{theorem}[newstatement]{Theorem}
\newtheorem{corollary}[newstatement]{Corollary}
\newtheorem{proposition}[newstatement]{Proposition}

\theoremstyle{def}
\newtheorem{definition}[newstatement]{Definition}

\theoremstyle{rmk}
\newtheorem{remark}[newstatement]{Remark}

\theoremstyle{claim}


\expandafter\let\expandafter\oldproof\csname\string\proof\endcsname
\let\oldendproof\endproof
\renewenvironment{proof}[1][\proofname]{%
  \oldproof[\slshape #1]%
}{\oldendproof}

\frenchspacing

\let\geq\geqslant
\let\leq\leqslant

\let\phi\varphi
\let\epsilon\varepsilon

\renewcommand{\emph}[1]{{\slshape #1}}
\renewcommand{\em}{\sl}

\title
[Non-K\"{a}hler complex structures on $\mathrm{R}^4$]
{Non-K\"{a}hler complex structures on $\bm{\R^4}$}

\author{Antonio J. Di Scala, Naohiko Kasuya and Daniele Zuddas}
\address{\normalfont Antonio J. Di Scala, Dipartimento di Scienze Matematiche `G.L. Lagrange', Politecnico di Torino, Corso Duca degli Abruzzi 24, 10129, Torino, Italy.}
\email{antonio.discala@polito.it}
\address{\normalfont Naohiko Kasuya, Department of Social Informatics, Aoyama Gakuin University, 5-10-1 Fuchinobe, Chuo-ku, Sagamihara, Kanagawa 252-5258, Japan.}
\email{nkasuya@si.aoyama.ac.jp}
\address{\normalfont Daniele Zuddas, Korea Institute for Advanced Study, 85 Hoegiro, Dongdaemun-gu, Seoul 02455, Republic of Korea.}
\email{zuddas@kias.re.kr}

\date{}

\keywords{achiral Lefschetz fibration, non-K\"{a}hler complex manifold}
\subjclass[2010]{32Q15, 57R40, 57R42}

\begin{document}

\begin{abstract}
We construct the first examples of non-K\"ahler complex structures on $\R^4$. These complex surfaces have some analogies with the complex structures constructed in early Fifties by Calabi and Eckmann on the products of two odd-dimensional spheres. However, our construction is quite different from that of Calabi and Eckmann.
\end{abstract}

\maketitle

\section{Introduction}
In the early Fifties, Calabi and Eckmann \cite{CE53} constructed an integrable complex structure on the Cartesian product of odd-dimensional spheres $M_{p, q} = S^{2p+1}\times S^{2q+1}$. These complex manifolds are nothing but complex tori for $p = q = 0$, while for $p \geq 1$ and $q = 0$ (or $p = 0$ and $q \geq 1$), they are Hopf manifolds \cite{Ho48}.
It is remarkable that for all non-zero $p$ and $q$, the manifolds $M_{p,q}$ have been the first examples of closed, simply connected complex manifolds which are not algebraic. Moreover, there is a holomorphic torus bundle $h_{p,q} \colon M_{p,q} \to \CP^p \times \CP^q$ given by the Hopf fibration on each factor.

By removing a point on each sphere and taking the product, we get an open subset $E_{p,q} \subset M_{p,q}$ which is diffeomorphic to $\R^{2p+2q+2}$.
If $p,q \geq 1$, most fibers of the bundle $h_{p,q}$ are contained in $E_{p,q}$. Thus, $E_{p,q}$ contains embedded holomorphic tori.
Therefore, it neither admits a K\"{a}hler metric, nor can be covered by a single holomorphic coordinate chart.
Calabi and Eckmann also proved that the only holomorphic functions on $E_{p,q}$ are the constants.

\begin{definition}
	A complex manifold $M$ is said to be of Calabi-Eckmann type if there exist a compact complex manifold $X$ of positive dimension, and a holomorphic immersion $k \colon X \to M$ which is null-homotopic as a continuous map. 
\end{definition}

It follows that a Calabi-Eckmann type complex manifold cannot be tamed by a symplectic form, and in particular it is not K\"ahler. As a consequence, Stein manifolds, complex algebraic manifolds, and open subsets of $\C^n$ with the induced complex structure, are not of Calabi-Eckmann type.

On the other hand, the manifolds $M_{p, q}$ and $E_{p, q}$ are of Calabi-Eckmann type for $p, q \geq 1$. Notice that $\R^4$ is not included in this list. Also notice that the Hopf manifolds $M_{p,0} = S^{2p+1} \times S^1$, $p\geq 1$, are not of Calabi-Eckmann type, since their universal cover is $\C^{p+1} - \{0\}$ (cf. Proposition~\ref{simpleprop/thm}).

The aim of this article is to construct Calabi-Eckmann type complex structures on $\R^4$. This represents a major improvement of a result of \cite{SKZ15}, where we constructed a {\sl not integrable} almost complex structure on $\R^4$ that contains embedded holomorphic tori and an immersed holomorphic sphere with one node. The methods used there have been inspired by previous work of the second author \cite{Kas14, Kas15} (see also \cite{DSV2010} and \cite{DZ11} for related results).

The following proposition is an immediate consequence of the definition.

\begin{proposition}\label{simpleprop/thm}
Let $M$ and $N$ be complex manifolds, with $M$ of Calabi-Eckmann type. Then, $N$ is of Calabi-Eckmann type if either
\begin{enumerate}
\item there is an immersion of $M$ into $N$, or
\item there is a covering map $p \colon N \to M$.
\end{enumerate}
\end{proposition}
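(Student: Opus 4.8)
The plan is to exhibit, in each case, a compact complex manifold $X$ of positive dimension together with a null-homotopic holomorphic immersion into $N$, recycling the data that witness $M$ being of Calabi--Eckmann type. By hypothesis there are such an $X$ and a holomorphic immersion $k \colon X \to M$ that is null-homotopic. Replacing $X$ by one of its connected components (and $k$ by its restriction, which remains a null-homotopic holomorphic immersion, since a null-homotopy restricts to one), I may assume $X$ is connected; this will only be needed in case (2). Throughout I read ``immersion'' in (1) and ``covering map'' in (2) in the holomorphic category, as the Definition requires, so that the covering $p$ is in particular a local biholomorphism.

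For case (1), let $\iota \colon M \to N$ be the given holomorphic immersion, and take the composition $\iota \circ k \colon X \to N$. A composition of holomorphic immersions is again a holomorphic immersion, since the differentials compose and injectivity is preserved. Moreover, null-homotopy is preserved under post-composition by any continuous map: if $H$ is a homotopy from $k$ to a constant map, then $\iota \circ H$ is a homotopy from $\iota \circ k$ to a constant map. Hence $\iota \circ k$ witnesses that $N$ is of Calabi--Eckmann type, and this case is immediate.

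For case (2), the idea is to lift $k$ through $p \colon N \to M$. Since $k$ is null-homotopic, the induced homomorphism $k_* \colon \pi_1(X) \to \pi_1(M)$ is trivial, so $k_*(\pi_1(X)) = \{1\} \subseteq p_*(\pi_1(N))$; by the lifting criterion (using that $X$ is connected and locally path-connected) there is a continuous lift $\tilde{k} \colon X \to N$ with $p \circ \tilde{k} = k$. Because $p$ is a local biholomorphism, locally $\tilde{k} = p^{-1} \circ k$, so $\tilde{k}$ is holomorphic, and its differential equals $(dp)^{-1} \circ dk$, hence injective; thus $\tilde{k}$ is a holomorphic immersion.

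The one genuinely substantive point is that $\tilde{k}$ is again null-homotopic, and this is where I expect the only real work. Let $H \colon X \times [0,1] \to M$ be a null-homotopy with $H_0 = k$ and $H_1 \equiv m_0$ constant. By the homotopy lifting property of the covering $p$, with $\tilde{k}$ as the lift of the initial map $H_0$, I obtain $\tilde{H} \colon X \times [0,1] \to N$ with $\tilde{H}_0 = \tilde{k}$ and $p \circ \tilde{H} = H$. The terminal map $\tilde{H}_1$ is a lift of the constant map $H_1$, so it takes values in the single fiber $p^{-1}(m_0)$, which is discrete; since $X$ is connected, $\tilde{H}_1$ is itself constant. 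Therefore $\tilde{H}$ is a null-homotopy of $\tilde{k}$, and $\tilde{k}$ witnesses that $N$ is of Calabi--Eckmann type.
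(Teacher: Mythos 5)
Your proof is correct. The paper in fact gives no proof at all—it introduces the proposition with the sentence ``The following proposition is an immediate consequence of the definition''—and your argument (composing the witnessing immersion for case (1); lifting it through the covering for case (2), where the lift is a holomorphic immersion because the covering is a local biholomorphism, and the lifted homotopy ends at a map into a discrete fiber, hence constant) is precisely the natural fleshing-out of that claim.
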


Throughout this paper, we denote by $P$ the open subset of the plane defined by $$P = \{ (\rho_1, \rho_2) \in \R^2 \mid 0 < \rho_1 <1,  1<\rho_2 < \rho_1^{-1} \},$$ and we always assume $(\rho_1, \rho_2) \in P$.

We are now ready to state our main theorem.

\begin{theorem} \label{HF}
There is a family of Calabi-Eckmann type complex structures $\{J(\rho_1, \rho_2)\}$ on $\R^4$, parametrized by $(\rho_1, \rho_2) \in P$, and a surjective map $f\colon \R^4 \to \CP^1$ with only one critical point, such that:
\begin{enumerate}
\item $f$ is holomorphic with respect to $J(\rho_1, \rho_2)$ and the complex hessian at the critical point of $f$ is of maximal rank, for all $(\rho_1, \rho_2) \in P$;
\item the only holomorphic functions on $(\R^4, J(\rho_1, \rho_2))$ are the constants;
\item $J(\rho_1, \rho_2)$ depends smoothly on $(\rho_1, \rho_2) \in P$;
\item $(\R^4, J(\rho_1, \rho_2))$ is not biholomorphic to $(\R^4, J(\rho_1', \rho_2'))$ for any $(\rho_1, \rho_2) \neq (\rho_1', \rho_2')$;
\item the fibers of $f$ are either an immersed holomorphic sphere with one node, embedded holomorphic cylinders, or embedded holomorphic tori.
\end{enumerate}
\end{theorem}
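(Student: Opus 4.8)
The plan is to construct $f$ and the structures $J(\rho_1,\rho_2)$ explicitly, by holomorphically gluing two standard pieces over $\CP^1$, rather than by any abstract existence argument; integrability --- the feature absent from the almost complex construction of \cite{SKZ15} --- is then built in from the outset. Split $\CP^1$ as $D_0\cup D_\infty$, two closed discs overlapping in an annulus $A$, with the critical value $c$ interior to $D_0$. Over $D_0$ I would model $f$ on the holomorphic map $(z_1,z_2)\mapsto z_1z_2$ of a polydisc in $\C^2$: its only critical point is the origin, where the complex Hessian $\left(\begin{smallmatrix}0&1\\1&0\end{smallmatrix}\right)$ has maximal rank, which already gives (1), and this is a Lefschetz singularity whose vanishing cycle is a loop on the neighbouring torus fibers, collapsing to produce the immersed sphere with one node. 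Over $D_\infty$ I would prescribe the complex structure that completes the fibers to tori $\C^*/(w\sim\lambda w)$ with modulus $\lambda=\lambda(\rho_1,\rho_2)$, the relevant annuli and their closing-up being controlled by the radii $(\rho_1,\rho_2)$; the inequalities $0<\rho_1<1<\rho_2<\rho_1^{-1}$ defining $P$ are exactly the condition that these annuli close up consistently into tori of the prescribed conformal type.

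The heart of the matter is the holomorphic identification over the overlap $A$: the annular pieces coming from the $z_1z_2$-model must be matched biholomorphically, fiberwise over $A$ and compatibly with orientations, to the annuli that complete to the torus fibers, with the matching holomorphic in the base coordinate and smooth in $(\rho_1,\rho_2)$. I would write this transition in closed form in terms of $(\rho_1,\rho_2)$ and check directly that it is biholomorphic, so that the two pieces assemble into a genuine integrable structure $J(\rho_1,\rho_2)$ for which $f$ is globally holomorphic. Smoothness of the formulas in $(\rho_1,\rho_2)$ yields (3); the degenerate fiber over $c$ is the immersed nodal sphere, the generic fibers are the embedded tori, and the embedded cylinders arise over the region of $\CP^1$ corresponding to the end of the total space, giving (5). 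A parallel bookkeeping with handles on the underlying smooth fibration identifies the total space diffeomorphically with $\R^4$.

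For (2), any holomorphic function $h$ is constant on each torus fiber by the maximum principle, so over the interior of the torus region it is the pullback along $f$ of a holomorphic function on the base; by the identity theorem this factorization persists over the cylinder region as well, so $h$ descends to a holomorphic, hence constant, function on $\CP^1$. For (4), the moduli of the torus fibers are biholomorphism invariants: any biholomorphism $(\R^4,J(\rho_1,\rho_2))\to(\R^4,J(\rho_1',\rho_2'))$ sends compact holomorphic curves to compact holomorphic curves and the unique critical point of $f$ to that of $f'$, hence induces a biholomorphism of the bases carrying one family of fiber tori onto the other; matching conformal types forces $(\rho_1,\rho_2)=(\rho_1',\rho_2')$. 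Finally, each embedded holomorphic torus is null-homotopic in the contractible $\R^4$, so $(\R^4,J(\rho_1,\rho_2))$ is of Calabi-Eckmann type directly from the definition.

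The main obstacle is precisely the holomorphic gluing over $A$. Producing an honest biholomorphic matching of the Lefschetz annuli with the torus-completing annuli --- holomorphic in the base variable, smooth in $(\rho_1,\rho_2)$, and at the same time compatible with the diffeomorphism type $\R^4$ of the total space --- is exactly the step at which one passes from the almost complex setting of \cite{SKZ15} to an integrable structure, and it is where the constraints cutting out $P$ become indispensable.
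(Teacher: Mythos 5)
Your overall architecture --- holomorphically gluing two explicitly fibered pieces over a two-disk cover of $\CP^1$ and reading (2)--(5) off the fiber structure --- is the same as the paper's, but the way you distribute the fibers over the base is wrong, and it is not a cosmetic point. In the correct configuration (the one forced by the Matsumoto--Fukaya fibration on $S^4$ with a neighborhood $X$ of the \emph{negative} critical point removed), all compact fibers --- the tori \emph{and} the nodal sphere --- lie over one and the same disk, the one containing the critical value, and the piece above that disk must be a complete elliptic fibration: the paper takes Kodaira's model $W$, the completion of $(\C^{*}\times\Delta(0,\rho_1))/\Z$ with action $n\cdot(z,w)=(zw^{n},w)$ (equivalently the Weierstrass family), whose fibers are already closed tori degenerating to an immersed sphere with one node. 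The other disk, around $\infty$, carries the \emph{trivial annulus bundle} $\Delta(1,\rho_2)\times\Delta(\rho_0^{-1})$, whose cylinder fibers form the end of $\R^4$. Your proposal inverts this: the polydisc model of $(z_1,z_2)\mapsto z_1z_2$ over $D_0$ has only annulus fibers and a non-compact local node (two transverse disks), never a torus nor an immersed nodal sphere, so (1) and (5) cannot be extracted from it; and placing honest torus fibers over all of $D_\infty$ makes $f^{-1}(D_\infty)$ a compact $T^2\times D^2$, leaving no region of the base for the cylinder fibers and producing the wrong end, so the total space could not be diffeomorphic to $\R^4$. The decomposition itself has to be fixed before anything else can be salvaged.

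The second gap is the one you flag and then defer: the transition over the overlap is not a verification to be ``checked directly,'' it is the theorem. The gluing must realize holomorphically the Hopf-type monodromy of the Matsumoto--Fukaya fibration: as the base point runs once around the overlap annulus, the annulus fiber must sweep once around the longitude of the torus fiber (this is exactly what Proposition~\ref{R^4} requires to obtain $B^4$); a fiberwise gluing that does not spiral in this way produces a different $4$-manifold. The paper's solution is the multivalued function $\phi(w)=\exp\bigl(\frac{1}{4\pi i}(\log w)^2-\frac{1}{2}\log w\bigr)$, whose branches are permuted transitively by $\phi\mapsto w^{n}\phi$, so that $\Phi(z,w)=(z\phi(w),w)$ composed with the quotient map $\C^{*}\times\Delta(0,\rho_1)\to W$ is a single-valued holomorphic embedding of $\Delta(1,\rho_2)\times\Delta(\rho_0,\rho_1)$; and the inequality $1<\rho_2<\rho_1^{-1}$ is precisely what lets $\Delta(1,\rho_2)$ embed in a fundamental domain of each fiber torus $T^2_w$, not a ``closing up into tori'' condition. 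Without exhibiting such a map there is no proof. Finally, your fibers all have a single modulus $\lambda(\rho_1,\rho_2)$, and with that your argument for (4) collapses: matching one conformal type is one equation and cannot pin down two parameters. The paper instead uses that the Kodaira fibers $T^2_w\cong\C/\bigl(\Z\oplus\Z\,\tfrac{1}{2\pi i}\log w\bigr)$ have modulus varying with $w$, which forces the induced map on the base to be the identity and hence $\rho_1=\rho_1'$, while the classification of annuli applied to the cylinder fibers gives $\rho_2=\rho_2'$; both steps presuppose Proposition~\ref{classcurv/thm}, which identifies the union of all compact holomorphic curves with $W$, so that any biholomorphism is automatically fiber-preserving.
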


We denote by $E(\rho_1,\rho_2)$ the complex manifold $(\R^4, J(\rho_1, \rho_2))$.

\begin{remark}
Property $(1)$ implies that $f$ can be locally expressed by $f(z_1, z_2) = z_1^2 + z_2^2$ in a neighborhood of the critical point, with respect to suitable local holomorphic coordinates in the complex structure $J(\rho_1, \rho_2)$. In other words, $f$ has a Lefschetz critical point. In fact, as we shall see below, $f$ is the restriction of an achiral Lefschetz fibration on $S^4$.
\end{remark}

We point out that, as far as we know, $J(\rho_1, \rho_2)$ are the first examples of non-K\"ahler complex structures on $\R^4$.

The following proposition gives a classification of the holomorphic curves of $E(\rho_1, \rho_2)$.

\begin{proposition}\label{classcurv/thm}
If $S$ is a closed Riemann surface, and $g \colon S \to E(\rho_1, \rho_2)$ is holomorphic, then either $g$ is constant or $g(S)$ is a closed fiber of $f$. It follows that the only compact holomorphic curves of $E(\rho_1, \rho_2)$ are the compact fibers of $f$, namely embedded holomorphic tori or the immersed holomorphic sphere.
\end{proposition}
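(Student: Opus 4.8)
The plan is to study a holomorphic map $g\colon S\to E(\rho_1,\rho_2)$ by composing it with the fibration, setting $h=f\circ g\colon S\to\CP^1$. Since $S$ is a closed Riemann surface, $h$ is either constant or surjective (its image is open by the open mapping theorem and closed by compactness, and $\CP^1$ is connected). The whole argument then splits into two cases according to this dichotomy: when $h$ is constant, $g(S)$ lies in a single fiber and a case-by-case analysis of the three fiber types provided by property~(5) of Theorem~\ref{HF} finishes the job; the real content is to rule out the possibility that $h$ is non-constant.

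First I would treat the case $h\equiv c$, so that $g(S)\subseteq f^{-1}(c)$. If $f^{-1}(c)$ is an embedded cylinder, then it is an open Riemann surface, hence Stein, and admits a proper holomorphic embedding into some $\C^N$; composing with $g$ and applying the maximum principle to each coordinate (a holomorphic function on the compact $S$) shows that $g$ is constant. If $f^{-1}(c)$ is an embedded torus $T$, then $g\colon S\to T$ is a holomorphic map between closed Riemann surfaces, hence either constant or a branched covering onto $T$; in the latter case $g(S)=T=f^{-1}(c)$, a closed fiber. Finally, if $f^{-1}(c)$ is the immersed nodal sphere $F_0$, the universal property of the normalization $\nu\colon\CP^1\to F_0$ lifts $g$ (defined on the smooth, hence normal, surface $S$) to a holomorphic map $\tilde g\colon S\to\CP^1$ with $g=\nu\circ\tilde g$; again $\tilde g$ is constant or surjective, and in the surjective case $g(S)=F_0$. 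Thus in every subcase $g$ is constant or $g(S)$ is a closed fiber.

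The crux is excluding non-constant $h$, and here I would play positivity of intersections against the vanishing of the homological intersection form of $\R^4$. Suppose $h$ is surjective. Pick any regular value $t$ whose fiber $F=f^{-1}(t)$ is a torus; surjectivity of $h$ forces $g(S)\cap F\neq\emptyset$, and since $h$ is non-constant $g(S)$ is not contained in $F$, so the two compact holomorphic curves $g(S)$ and $F$ share no irreducible component. On one hand, because $g(S)$ and $F$ are holomorphic curves in the complex surface $E(\rho_1,\rho_2)$, each of their intersection points carries a strictly positive local intersection multiplicity, so their geometric intersection number is a positive integer. On the other hand, $g_*[S]$ and $[F]$ are classes in $H_2(\R^4;\Z)=0$; hence $g(S)$ bounds a compact $3$-chain, and its algebraic (signed) intersection number with the cycle $F$ vanishes. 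Since for holomorphic curves the geometric and the algebraic intersection numbers coincide (all local contributions being positive), we reach a contradiction. Therefore $h$ must be constant, and the classification follows.

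I expect the cylinder and nodal-sphere subcases above, together with the positivity-versus-homology comparison, to be routine once the fiber types of Theorem~\ref{HF} are in hand; the only point requiring care is the identification of the geometric and algebraic intersection numbers for a possibly singular (immersed) holomorphic curve such as $g(S)$, which rests on the standard fact that distinct local holomorphic branches meet with positive multiplicity. The final assertion of the Proposition is then immediate: any compact holomorphic curve $C\subseteq E(\rho_1,\rho_2)$ is the image of its normalization under a non-constant holomorphic map, so by the main dichotomy each irreducible component of $C$ is a closed fiber; as distinct fibers are disjoint and fibers are connected, $C$ is a single compact fiber, namely one of the embedded tori or the immersed nodal sphere.
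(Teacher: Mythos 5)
Your proposal is correct, but it takes a genuinely different route from the paper's at the crucial step. The paper disposes of the composition $f \circ g$ in one line: since it factorizes through the contractible space $E(\rho_1,\rho_2) \cong \R^4$, it is null-homotopic, hence of degree zero, and a holomorphic map of degree zero between compact Riemann surfaces is constant. You instead run the constant-or-surjective dichotomy and kill the surjective case by intersection theory: a non-constant $g$ would produce a compact holomorphic curve meeting a torus fiber $F$ in finitely many points of positive local multiplicity, while $H_2(\R^4;\Z)=0$ forces the homological intersection number $g_*[S]\cdot[F]$ to vanish. This is the standard positivity-of-intersections argument (the same principle showing that Calabi-Eckmann type manifolds admit no taming symplectic form), and it is sound, though heavier: it needs Remmert's theorem to know $g(S)$ is a compact analytic curve, and the identification of geometric and homological intersection numbers at possibly singular points of $g(S)$ --- the point you rightly flag as the delicate one. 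What your route buys is slightly greater generality (you use only $H_2(E)=0$, not contractibility) and, more substantially, your case analysis of the constant case (cylinder handled via Stein theory and the maximum principle, torus via branched coverings, nodal sphere via normalization) together with the final connectedness argument fills in precisely the part of the proof that the paper leaves implicit behind the phrase ``it is sufficient to show that $f\circ g$ is constant.'' What the paper's degree argument buys is brevity: no intersection theory, no choice of regular values, no normalization, and no discussion of singular intersections.
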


The following is a corollary of Theorem~\ref{HF}.

\begin{corollary} \label{blowups}
The blowup $E(\rho_1,\rho_2) \cs m \overline{\CP^2}$ is a Calabi-Eckmann type complex manifold.
In particular, $m \overline{\CP^2} - \left\{\pt \right\}$ admits uncountably many non-K\"{a}hler complex structures, that are pairwise biholomorphically distinct. Moreover, the only holomorphic functions on the blowup $E(\rho_1,\rho_2) \cs m \overline{\CP^2}$ are the constants.
\end{corollary}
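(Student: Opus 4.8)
The plan is to verify the three assertions separately, leaning on Theorem~\ref{HF} and Proposition~\ref{classcurv/thm}.

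\emph{Calabi--Eckmann type, hence non-K\"ahler.} To exhibit the blowup $E(\rho_1,\rho_2) \cs m\,\overline{\CP^2}$ as a Calabi--Eckmann type manifold I would blow up at $m$ points $p_1,\dots,p_m$ chosen off every compact fiber of $f$ (for instance on the non-compact cylinder fibers). Let $\pi \colon E(\rho_1,\rho_2)\cs m\,\overline{\CP^2} \to E(\rho_1,\rho_2)$ be the blowdown. Since the $p_i$ avoid it, any torus fiber $T$ of $f$ lifts to a holomorphically embedded torus $\widetilde T$ disjoint from the exceptional divisors. It then remains to check that $\widetilde T$ is null-homotopic in the blowup: a null-homotopy of $T$ in $E(\rho_1,\rho_2)=\R^4$ is a map of the $3$-dimensional space $T\times[0,1]$ into a $4$-manifold, so by transversality it can be perturbed, rel the ends, to miss the finite set $\{p_1,\dots,p_m\}$, and thus descends to a null-homotopy of $\widetilde T$. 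This produces the null-homotopic holomorphic immersion of a positive-dimensional compact complex manifold required by the definition, so the blowup is of Calabi--Eckmann type, and in particular non-K\"ahler by the remark following the definition.

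\emph{The smooth model.} Writing $\R^4 = S^4 - \{\pt\}$ and performing the blowups away from the deleted point yields a diffeomorphism $E(\rho_1,\rho_2)\cs m\,\overline{\CP^2} \cong (S^4\cs m\,\overline{\CP^2}) - \{\pt\} = m\,\overline{\CP^2} - \{\pt\}$, since connected sum with $S^4$ is trivial. Hence, as $(\rho_1,\rho_2)$ ranges over the uncountable set $P$, one obtains uncountably many non-K\"ahler complex structures on the single smooth manifold $m\,\overline{\CP^2} - \{\pt\}$.

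\emph{Biholomorphic distinctness.} This is the heart of the matter. I would first classify the irreducible compact holomorphic curves of the blowup: under $\pi$ such a curve maps either to a point, in which case it is one of the exceptional divisors $E_i\cong\CP^1$, or onto a compact holomorphic curve of $E(\rho_1,\rho_2)$, which by Proposition~\ref{classcurv/thm} is a torus fiber or the nodal sphere; as the $p_i$ lie on no compact fiber, in the latter case the curve is exactly that fiber, with no exceptional component attached. Consequently the smooth rational $(-1)$-curves of the blowup are precisely the $m$ exceptional divisors (the torus fibers have genus $1$, and the nodal sphere is singular, each of self-intersection $0$). Any biholomorphism $\Phi \colon E(\rho_1,\rho_2)\cs m\,\overline{\CP^2} \to E(\rho_1',\rho_2')\cs m\,\overline{\CP^2}$ preserves this data, so it carries the union of exceptional divisors onto the union of exceptional divisors and therefore descends, through the two blowdowns, to a biholomorphism $E(\rho_1,\rho_2) \to E(\rho_1',\rho_2')$; by Theorem~\ref{HF}$(4)$ this forces $(\rho_1,\rho_2)=(\rho_1',\rho_2')$. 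The verification that no unexpected $(-1)$-curve arises, together with the descent of $\Phi$ across the blowdowns (via the universal property of the blowup, or equivalently via removable singularities), is where I expect the real work to lie.

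\emph{Holomorphic functions.} Finally, for a holomorphic function $\widetilde g$ on the blowup, its restriction to each exceptional $\CP^1$ is constant, so $\widetilde g$ extends continuously across the blowdown to a function on $E(\rho_1,\rho_2)$ that is holomorphic off the finite set $\{p_1,\dots,p_m\}$. Since points have complex codimension $2$, this function extends holomorphically across the $p_i$ by the Riemann--Hartogs removable singularity theorem, and by Theorem~\ref{HF}$(2)$ it is constant; pulling back through $\pi$ and using continuity, $\widetilde g$ is constant as well. This completes the plan.
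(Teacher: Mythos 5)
Your proposal is correct and follows essentially the same route as the paper: tori disjoint from the blown-up points remain holomorphic and null-homotopic (giving Calabi--Eckmann type), biholomorphisms of the blowups descend to biholomorphisms of $E(\rho_1,\rho_2)$ so that Theorem~\ref{HF}$(4)$ applies, and holomorphic functions factor through the blowdown and are constant by Theorem~\ref{HF}$(2)$. The paper's proof is much terser---it simply invokes ``the same argument'' as in Theorem~\ref{HF}$(4)$---so the details you supply (the transversality argument for the null-homotopy, the classification of compact curves and of the exceptional $(-1)$-curves via Proposition~\ref{classcurv/thm}, and the removable-singularity step) are exactly the ones the paper leaves implicit.
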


The paper is organized as follows.
In Section 2, we recall the construction of the Matsumoto-Fukaya torus fibration on $S^4$, which is a genus-1 Lefschetz fibration over $S^2$. This fibration plays a central role in the proof of Theorem~\ref{HF}.
As an application, we derive a certain decomposition of $\R^4$ in Proposition~\ref{R^4}.

In Section 3, we construct the complex structure $J(\rho_1, \rho_2)$ as well as the holomorphic map $f$, and we prove Theorem~\ref{HF} and Corollary~\ref{blowups}.

\section{The Matsumoto-Fukaya fibration on $S^4$}~\label{Matsumoto}
In the early Eighties, Yukio Matsumoto constructed a genus-1 achiral Lefschetz fibration $f \colon S^4 \to S^2$, having two critical points of opposite signs \cite{Mat82}. As it has been remarked by Matsumoto himself in the same article, Kenji Fukaya gave an important contribution in the understanding of this fibration. For this reason, in a private conversation Matsumoto suggested to us to call $f$ the {\sl Matsumoto-Fukaya fibration}, and we are glad to follow his suggestion.

Without going into details, $f$ can be defined as follows. Start with the Hopf fibration $h \colon S^3 \to S^2$, and take the suspension $\varSigma h \colon \varSigma S^3 \to \varSigma S^2$. There is a canonical smoothing $\varSigma S^n \cong S^{n+1}$, which makes the suspension $\varSigma h$ into a smooth map $\varSigma h \colon S^4 \to S^3$, see also \cite{SKZ15} for an explicit computation.

The composition $f' = h \circ \varSigma h$ is a torus fibration with two Lefschetz singularities, but the two critical points belong to the same fiber. Indeed, the following formula can be easily obtained \cite{SKZ15}
$$f'(z_1, z_2, x) = (4 z_1 \bar z_2 (|z_1|^2 - |z_2|^2 - i x \sqrt{2 - x^2}),\: 8 |z_1|^2 |z_2|^2 - 1),$$
where $S^{2n}$ is thought as the unit sphere in $\C^n \times \R$ defined by the equation $$|z_1|^2 + \cdots + |z_n|^2 + x^2 = 1.$$ In order to get two distinct singular fibers, we slightly perturb $f'$ and the result is the Matsumoto-Fukaya torus fibration $f \colon S^4 \to S^2$.
A description of this fibration is also given in \cite[Example 8.4.7]{GS99} in terms of a Kirby diagram, that is depicted in Figure~\ref{s4fib/fig}, where the framings are referred to the blackboard framing.

\begin{figure}[htb]
\centering\includegraphics{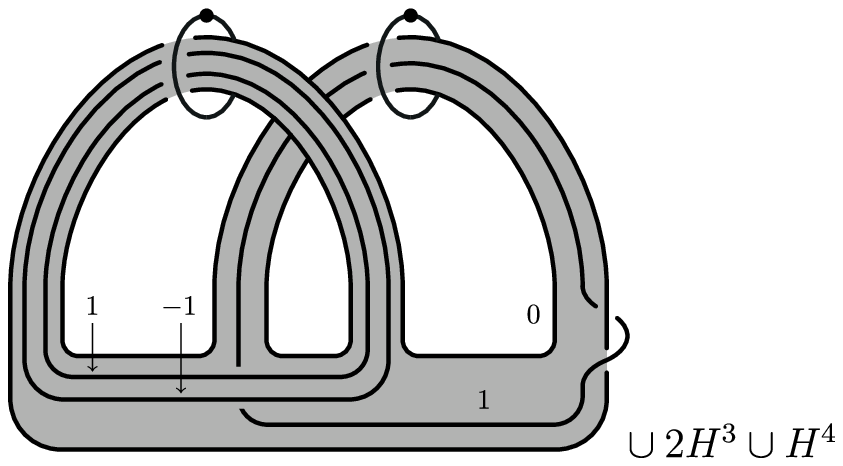}
\caption{The Matsumoto-Fukaya fibration on $S^4$.}
\label{s4fib/fig}
\end{figure}

We now explain this Kirby diagram and show how it can be derived. Let $a_1 \in S^2$ be the positive critical value of $f$, and let $a_2$ be the negative one.
Decompose the base space $S^2$ as the union of two disks $D_1$ and $D_2$ such that $a_j \in \Int D_j$, and put $N_j=f^{-1}(D_j)$. 
Then $N_j$ is a tubular neighborhood of $F_j = f^{-1}(a_j) \subset S^4$.

It follows that $\partial N_1$ and $\partial N_2$ are torus bundles over the circle, that are identified by a fiber-reversing diffeomorphism. So, $\partial N_1$ and $\partial N_2$ are essentially the same torus bundle, but with different orientations.

Let us consider the (achiral) Lefschetz fibration $f_j = f_{|N_j} \colon N_j \to D_j \cong B^2$, having only one critical point. It can be easily realized that the monodromy of $f_1$ is a right handed Dehn twist $\delta$ about an essential simple curve $c \subset T^2$ that can be identified with a meridian of the torus, while the monodromy of $f_2$ is given by $\delta^{-1}$. For generalities on Lefschetz fibrations and their monodromies, a good reference is \cite{GS99}. A description of the monodromy and the induced handlebody decomposition is given also in \cite{APZ2013}.

Therefore, $f_2$ can be identified with $f_1$ by reversing the orientation of the base disk, and keeping the same orientation on the fiber.

Since $f$ is built on the Hopf fibration, the monodromy of the latter reflects on the gluing diffeomorphism $\phi$ between $N_1$ and $N_2$. Namely, we have $$\partial N_1 = \frac{[0, 2\pi] \times T^2}{(0, \delta(x, y)) \sim (2\pi, x, y)},$$ where $(x, y)$ are the angular coordinates in $T^2 = S^1 \times S^1$, and the attaching diffeomorphism $\phi \colon \partial N_1 \to \partial N_2 = -\partial N_1$ is given by $\phi(t, x, y) = (t, x, y + t)$, which passes to the quotient. In other words, while running over $\partial D_1$, the fiber rotates along the longitude of $2\pi$ radians.

In Figure~\ref{s4fib/fig} the two 2-handles attached along parallel curves correspond to the two Lefschetz critical points, giving the corresponding vanishing cycles that are parallel to the curve $c$. The 2-handle with framing 0 attached along the boundary of the punctured torus is needed to close the fiber.

At this point, the Kirby diagram of Figure~\ref{s4fib2/fig} describes the fiber sum of $f_1$ and $f_2$ along an arc in $\partial B^2$.

\begin{figure}[htb]
\centering\includegraphics{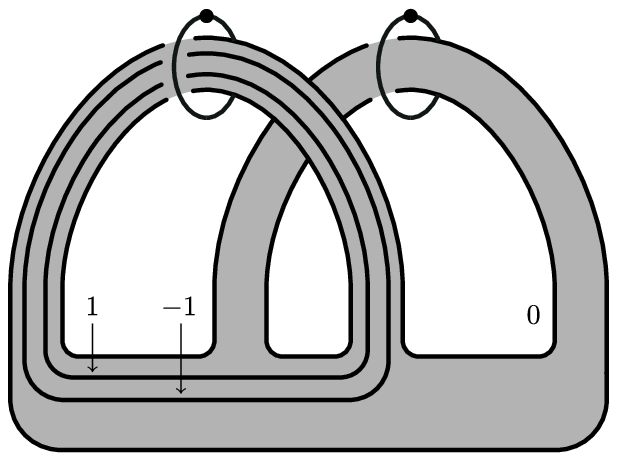}
\caption{The fiber sum of $f_1$ and $f_2$.}
\label{s4fib2/fig}
\end{figure}

In order to complete the fibration, we have to glue a trivial bundle $B^2 \times T^2$ by a fiber-preserving diffeomorphism given by $2\pi$ radians rotation in the longitudinal direction.

Considering $B^2$ as a 2-handle and taking the product with the standard handle decomposition of $T^2$ we get an extra 2-handle attached along a section, that follows the longitude. A simple computation shows that this 2-handle has framing one. Also, we get two 3-handles and a 4-handle.

Removing a neighborhood $X$ of the singular point of $F_2$ which is diffeomorphic to $B^4$, we obtain $\R^4$.
We define the subset $X$ of $N_2$ to be the standard model of the neighborhood of a negative Lefschetz singularity.
That is, $X$ is the total space of a singular annulus fibration over $D_2$ with one singular fiber and a left-handed Dehn twist as the monodromy.
It is well-known that $X$ is diffeomorphic to $B^4$, up to smoothing the corners.
Then $N_2 - \Int X$ is the total space of a trivial annulus bundle over $D_2$.

In other words, from the Kirby diagram of Figure~\ref{s4fib/fig} we are removing a 4-handle, the 3-handle that comes from the longitude of the torus, and the  2-handle with framing one that comes from the negative Lefschetz critical point, thus obtaining the diagram of Figure~\ref{r4fib/fig}. A simple computation shows that this represents $B^4$, taking into account that the 3-handle immediately cancels with the 0-framed 2-handle, as it results from its attaching map.

This Kirby diagram encodes the map $f_| \colon S^4 - \Int X \cong B^4 \to S^2$ as part of the Matsumoto-Fukaya fibration.

\begin{figure}[htb]
\centering\includegraphics{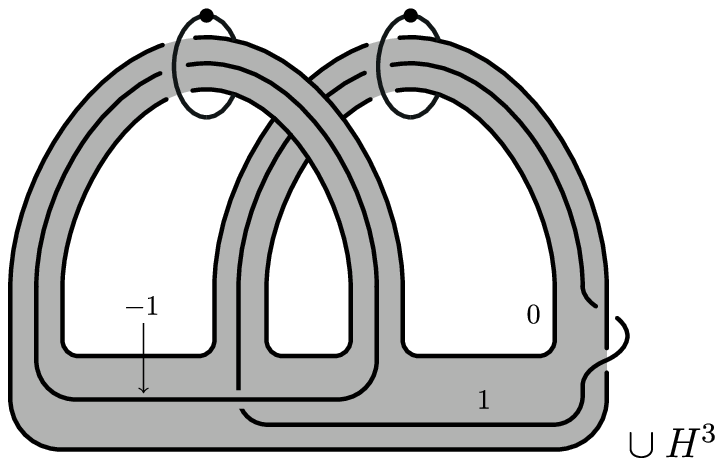}
\caption{The map $f$ on $B^4$.}
\label{r4fib/fig}
\end{figure}

The above considerations can be resumed in the following proposition, where $A = S^1 \times [0, 1]$.

\begin{proposition}~\label{R^4}
If we glue $B^2 \times A$ to $N_1$ along $S^1 \times A$ so that for each $t\in \partial B^2 = -\partial D^2_1 \cong S^1$,
the annulus $\{t\} \times A$ embeds in $f^{-1}(t)\cong T^2$ as a thickened meridian,
and it rotates in the longitude direction once when $t\in S^1$ rotates once, then the resulting manifold is diffeomorphic to $B^4$, and so the interior is diffeomorphic to $\R^4$.
\end{proposition}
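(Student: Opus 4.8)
The plan is not to analyze the attaching map from scratch, but to recognize the resulting manifold as $S^4 - \Int X$, which has already been shown to be diffeomorphic to $B^4$. Recall from the discussion above that $S^4 = N_1 \cup_\phi N_2$, where $N_j = f^{-1}(D_j)$ and the gluing $\phi \colon \partial N_1 \to \partial N_2$ is the Hopf-monodromy map $\phi(t, x, y) = (t, x, y + t)$, with $x$ the meridional and $y$ the longitudinal angle of the torus fiber.

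First I would use the decomposition $N_2 = X \cup (N_2 - \Int X)$ recalled above: the standard negative-Lefschetz model $X$ is diffeomorphic to $B^4$, its singular annulus fiber being a thickened meridian of $T^2$ on which the left-handed Dehn twist monodromy is supported, while the complement $N_2 - \Int X$ is a trivial annulus bundle $B^2 \times A$ with $B^2 \cong D_2$ and fiber $A$ again a thickened meridian. It then remains to check that, under $\phi$, the outer boundary $S^1 \times A$ (over $\partial B^2 = -\partial D_1$) of this bundle is attached to $N_1$ exactly as in the statement. For each such $t$ the fibre $\{t\} \times A$ sits inside $f^{-1}(t) \cong T^2$ as a thickened meridian, and the formula $\phi(t, x, y) = (t, x, y + t)$ shifts the longitudinal coordinate by $t$; hence as $t$ traverses $S^1$ once the annulus rotates once along the longitude. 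These are precisely the gluing data prescribed in the statement.

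Consequently the manifold constructed in the statement is $N_1 \cup_\phi (N_2 - \Int X) = S^4 - \Int X$, which is diffeomorphic to $B^4$ since $X \cong B^4$, and its interior is therefore $\R^4$. The step requiring the most care is this matching of the gluing data: one has to keep track of the fibre-reversing identification $\partial N_2 = -\partial N_1$ together with the longitudinal rotation, and verify that these introduce no extra twisting in the product structure $B^2 \times A$. This is exactly the point confirmed independently by the Kirby-calculus reduction leading to Figure~\ref{r4fib/fig}, in which the longitudinal $3$-handle cancels the $0$-framed $2$-handle and leaves $B^4$.
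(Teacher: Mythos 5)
Your proof is correct and follows essentially the same route as the paper: the paper likewise identifies the glued manifold with $S^4 - \Int X$ via the decomposition $N_2 = X \cup (N_2 - \Int X)$ (with $X$ the standard negative Lefschetz model, diffeomorphic to $B^4$, and $N_2 - \Int X$ a trivial annulus bundle) together with the monodromy description $\phi(t,x,y) = (t,x,y+t)$, and then verifies that the result is $B^4$ by the Kirby-calculus cancellation leading to Figure~\ref{r4fib/fig}. The only point worth making explicit is that your shortcut ``$S^4 - \Int X \cong B^4$ because $X \cong B^4$'' rests on the disk theorem (Cerf--Palais): smoothly embedded closed $4$-balls in $S^4$ are unique up to ambient isotopy, so the complement of $\Int X$ is standard; the paper's Kirby computation establishes this directly without invoking that fact.
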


\section{Construction of the complex structures}
In the following we make use of the notations
$\Delta(r_0, r_1) = \{z \in \C \mid r_0 < |z| < r_1 \}$, $\Delta(r_1) = \{z \in \C \mid \allowbreak |z| < r_1 \}$, for $0 \leq r_0 < r_1$. We put also $\C^* = \C - \{0\}$.
For a given $w \in \C^*$ such that $|w| < 1$, we consider the smooth elliptic curve $T^2_w=\C^{\ast }\slash \Z \cong T^2$, where the action is given by $n\cdot z = w^n z$. 
We call the curve $\mu = \{|z|=1\} \subset T^2_w$ the meridian and the curve $\lambda = \{\arg z =0 \} \subset T^2_w$ the longitude of the torus.
These meridian and longitude can be identfied with those of the previous section.

\begin{proof}[Proof of Theorem~\ref{HF}]
We begin with the construction of the complex structure $J(\rho_1, \rho_2)$. 
Since $N_1$ is the total space of a positive genus-$1$ Lefschetz fibration over the $2$-disk with one singular fiber,
there exists a complex structure such that the fibration $$f_{|N_1} \colon N_1\to D_1$$ is holomorphic.
Indeed, we consider a holomorphic elliptic fibration over $S^2$, and
we take a tubular neighborhood of a singular fiber which is fiberwise diffeomorphic with $N_1$ (see also \cite{GS99}).
According to \cite{Kod63}, we can give a more explicit model of $\Int N_1$ by the Weierstrass curves.
For $0<\rho_1<1$, we consider the complex submanifold
$$S = \left\{([z_0 {:} z_1 {:} z_2], \tau )\mid z_1^2z_2-4z_0^3-z_0^2z_2+g_2(\tau )z_0z_2^2+g_3(\tau )z_2^3=0 \right\}\subset \CP^2\times \Delta (\rho_1),$$
where
\begin{align*}
g_2(\tau ) &= 20\sum _{n=1}^{\infty } (1-\tau ^n)^{-1}n^3 {\tau }^n\\
g_3(\tau ) &= \frac{1}{3}\sum _{n=1}^{\infty }(1-\tau ^n)^{-1}(7n^5+5n^3) {\tau }^n.
\end{align*}

For each $\tau \in \Delta (\rho_1)$, the fiber $\left\{z_1^2z_2-4z_0^3-z_0^2z_2+g_2(\tau )z_0z_2^2+g_3(\tau )z_2^3=0 \right\}\subset \CP^2$ is an elliptic curve and it is singular only for $\tau =0$. Using the coordinates $(x,y)=(z_0/z_2, z_1/z_2)$, the singular elliptic fiber is defined by the equation
$$y^2-4x^3-x^2=0$$ and it has an ordinary double point at $x=y=0$.
Hence, the canonical projection $$\pi \colon \CP^2 \times \Delta (\rho_1)\to \Delta (\rho_1)$$ restricts to a holomorphic map $\pi_{|S} \colon S \to \Delta (\rho_1)$,
which is a genus-$1$ holomorphic Lefschetz fibration over the $2$-disk with one singular fiber.
Thus, the complex manifold $S$ is a complex model of $\Int N_1$.

Now, we consider the quotient $(\C^{\ast} \times \Delta (0,\rho_1)) / \Z$, where for any $n\in \Z$, the action is given by
$$n \cdot (z, w) = (z w^n, w).$$
This elliptic fibration extends over $\Delta (\rho_1)$. Let us denote the completion by $W$.
Kodaira gave an explicit biholomorphism between $W$ and $S$ (see \cite{Kod63}, pp. 597--599).
So, in the following we shall consider $W$ as the model of $\Int N_1$, instead of the Weierstrass model $S$.

We fix a holomorphic atlas on the Riemann sphere given by two open disks $D_1' \supset D_1$ and $D_2' \supset D_2$. The disk $D_1'$ is biholomorphic with $\Delta(\rho_1)$ and $D_2'$ is biholomorphic with $\Delta(\rho_0^{-1})$, where $\rho_0 \in (0, \rho_1)$ is arbitrarily chosen, and the transition function $\psi \colon \Delta(\rho_0, \rho_1) \to \Delta(\rho_1^{-1}, \rho_0^{-1})$ is given by $\psi(z) = z^{-1}$.

We define the complex structure on the topologically trivial annulus bundle $N_2 - X$, considered over $\Delta(\rho_0^{-1}) \cong D_2'$, by the product structure 
$\Delta(1,\rho_2) \times \Delta(\rho_0^{-1})$. 

Next, we want to glue $W \cong \Int N_1$ with $\Delta(1,\rho_2) \times \Delta(\rho_0^{-1}) \cong \Int N_2 - X$ analytically along $\Delta(1,\rho_2) \times \Delta(\rho_1^{-1}, \rho_0^{-1})\allowbreak \subset \Delta(1,\rho_2) \times \Delta(\rho_0^{-1})$, so that the attaching map is isotopic to that of the Matsumoto-Fukaya fibration, implying that the resulting manifold is diffeomorphic to $\R^4$. 
In order to do this, we need to choose an attaching region in $W$ which is biholomorphic to the product $\Delta(1,\rho_2) \times \Delta(\rho_1^{-1}, \rho_0^{-1})$. 
In the following argument, we show how to take such a region in $W$.

We begin with a multi-valued holomorphic function $\phi \colon \Delta(\rho_0, \rho_1) \to \C^*$ such that multiplication by $w^n$ for all $n \in \Z$, determines a transitive $\Z$-action on the set of the branches of $\phi$. In other words, given any branch $\phi_0$ of $\phi$, all the other branches are of the form $w^n \phi_0(w)$ for an arbitrary $n \in \Z$.

For example, as it can be easily verified, we can take
\begin{eqnarray*}
\phi(w)=\exp \left(\frac{1}{4\pi i}(\log w)^2 - \frac{1}{2} \log w \right),
\end{eqnarray*}
where the two logarithms are taken simultaneously with all of their possible branches.

Next, consider the open subset $Y\subset \C ^{\ast} \times \Delta (0, \rho_1)$ defined by
\begin{eqnarray*}
Y=\left\{(z, w)\in \C ^{\ast }\times \Delta \big( \rho_0,\; \rho_1 \big) \mid  z \phi (w)^{-1} \in \Delta (1, \rho_2) \text{ for some value of } \phi(w) \right\}.
\end{eqnarray*} 
Then, $Y$ can be parameterized by the multi-valued local holomorphic immersion $$\Phi \colon \Delta (1, \rho_2) \times \Delta(\rho_0, \rho_1) \to \C^* \times \Delta(0, \rho_1)$$ defined by $\Phi(z, w) = (z \phi(w), w)$. Notice that $Y$ is invariant under the action of $\Z$ on $\C^* \times \Delta(0, \rho_1)$.

It follows that the composition of $\Phi$ with the quotient map $$\pi \colon \C^* \times \Delta(0, \rho_1) \to (\C^* \times \Delta(0, \rho_1)) / \Z \subset W$$ is a single-valued holomorphic embedding, and we denote by $V$ the image of $Y$ in $W$.

Also notice that $f_{|V} \colon  V\to \Delta(\rho_0, \rho_1)$ is a holomorphic annulus bundle, and $\Phi$ determines a trivialization of this bundle.

Let $j\colon \Delta (1, \rho_2)\times \Delta (\rho_0, \rho_1) \to \Delta(1, \rho_2) \times \Delta(\rho_1^{-1}, \rho_0^{-1})$ be the biholomorphism defined by $j(z, w) = (z, w^{-1})$.
We use $\pi \circ \Phi \circ j^{-1} \colon \Delta(1, \rho_2) \times \Delta(\rho_1^{-1}, \rho_0^{-1}) \to V$ as the attaching biholomorphism for making the union
$$E(\rho_1,\rho_2) = (\Delta (1, \rho_2)\times \Delta ({\rho_0}^{-1})) \cup_V W,$$ which therefore is a complex manifold. We denote by $J(\rho_1, \rho_2)$ the complex structure of $E(\rho_1, \rho_2)$.

In order to identify the topology of $E(\rho_1, \rho_2)$,
we consider how the annulus fiber of $V=\Delta (1, \rho_2)\times \Delta(\rho_0, \rho_1)$ looks inside the toric fiber of $W \cong \Int N_1$. Let $w \in \Delta(\rho_0, \rho_1)$ be a complex number of some fixed modulus, and of arbitrary argument $\arg w$.
When $\arg(w)$ varies from $0$ to $2\pi$, a point $z$ of the fiber $\Delta(1, \rho_2)$ moves accordingly with the $\Z$-action on the branches of $\phi$, which encodes the attaching biholomorphism. Namely, $z$ goes to $w z$, that is in the next fundamental domain of the $\Z$-action on $\C^*$ that gives the torus $T^2_w = \C^* / \Z$.
This means that when $w$ rotates once in the argument direction, the annulus $\Delta (1, \rho_2)$ rotates once in the longitude direction of the fiber torus $T^2_w$, and this coincides with the attaching map of the Matsumoto-Fukaya fibration restricted on $S^4 - X$.
This implies that $E(\rho_1, \rho_2)$ is diffeomorphic to $\R^4$.

Observe that the number $\rho_0$ is auxiliary. 
Indeed, its role is just to provide an open subset $V$ of $\Int N_1$ which is necessary for the analytical gluing. 
Namely, $\rho_0$ only determines the size of the attaching region of the two complex manifolds. 
Hence, $E(\rho_1, \rho_2)$ does not depend on $\rho_0$ up to biholomorphisms.

Moreover, we can define a surjective holomorphic map $$f_{\rho_1, \rho_2} \colon E(\rho_1, \rho_2) \to \CP^1$$
by $f_{\rho_1, \rho_2}(z, w) = w$ if $(z, w) \in W$ and $f_{\rho_1, \rho_2}(z, w) = w^{-1}$ if $(z, w)\in \Delta (1, \rho_2)\times \Delta ({\rho_0}^{-1})$.

Notice that $f_{\rho_1, \rho_2}$ is isotopic to the restriction of the Matsumoto-Fukaya fibration $f$ (up to the diffeomorphism $E(\rho_1, \rho_2) \cong \R^4$). Actually, the isotopy can be chosen to be smooth with respect to the parameters $(\rho_1, \rho_2)$, and up to this isotopy we can assume that $f_{|\R^4}$ itself is a holomorphic fibration with respect to all of the complex structures $J(\rho_1, \rho_2)$. Moreover, the complex hessian of $f$ at the critical point, computed with respect to $J(\rho_1, \rho_2)$, is of maximal rank because this complex structure on $W$ coincides with that of the Weierstrass curves model.

Since $f$ has holomorphic tori as fibers, it follows that $E(\rho_1, \rho_2)$ is of Calabi-Eckmann type. Moreover, statements $(1)$, $(3)$ and $(5)$ of the theorem are implicit in the construction.

The non-existence of non-constant holomorphic functions also follows easily, since there is an open subset of $E(\rho_1,\rho_2)$, namely $W$, which is foliated by compact holomorphic curves, hence a holomorphic function $g \colon E(\rho_1,\rho_2) \to \C$ must be constant on these fibers. Therefore, the differential of $g$ is zero along those compact fibers. Since $dg$ is a holomorphic 1-form, it follows that $d g$ is zero even along the annulus fibers of $f$, and this implies that $g$ is constant on the fibers of $f$. Thus, $g$ factorizes by the fibration $f$. Namely, there is a holomorphic function $g' \colon \CP^1 \to \C$ such that $g = g' \circ f$. Since $g'$ is constant, $g$ is also constant. This proves statement $(2)$.

Now we give the proof of statement $(4)$, which is based on Proposition~\ref{classcurv/thm} that will be proved at the end of this section. 
By Proposition~\ref{classcurv/thm}, the union of all compact holomorphic curves of $E(\rho_1,\rho_2)$ is the open subset $W$. Analogously, we denote by $W'$ the union of the compact holomorphic curves of $E(\rho_1',\rho_2')$.

Suppose that there is a biholomorphism $g\colon E(\rho_1,\rho_2)\to E(\rho_1',\rho_2')$. We want to show that $(\rho_1, \rho_2) = (\rho_1', \rho_2')$.

The above discussion implies that $g$ decomposes into two biholomorphisms 
$g_{|} \colon W \to W'$ and
$g_{|} \colon \Delta (1, \rho_2)\times \Delta ({\rho_1}^{-1}) \to \Delta (1, \rho_2')\times \Delta ({\rho_1'}^{-1})$. Moreover, $g$ is fiber-preserving on $W$, and so $g_{|W}$ passes to the quotient, giving a biholomorphism $g' \colon \Delta(\rho_1) \to \Delta(\rho_1')$. By analyticity, $g$ must be fiber-preserving also on $\Delta (1, \rho_2)\times \Delta ({\rho_1}^{-1})$. This immediately gives $\rho_2 = \rho_2'$, because of the well-known holomorphic classification of complex annuli.

The torus $T^2_w$ that corresponds to the complex number $w \in \Delta(0, \rho_1)$, is isomorphic to a complex torus of the form $\C / (\Z \oplus \Z v)$, where $$v = \frac{1}{2\pi i} \log w = \frac{1}{2\pi} \arg w - \frac{i}{2\pi} \log |w|,$$ and $\arg w \in [0, 2\pi)$.

By the classification of complex non-singular elliptic curves, $g'$ must be the identity because $T^2_w$ is isomorphic to $g(T^2_w) = T^2_{g'(w)}$ for all $w \in \Delta(0, \rho_1)$. Therefore, we obtain $\rho_1 = \rho_1'$.
\end{proof}

\begin{proof}[Proof of Proposition \ref{classcurv/thm}]
It is sufficient to show that $f \circ g$ is constant. In fact, $f \circ g \colon S \to \CP^1$ is homotopic to a constant, since it factorizes through the contractible space $E(\rho_1, \rho_2)$. Therefore, $f \circ g$ is of degree zero. 
Since it is a holomorphic map between compact Riemann surfaces, it must be constant.
\end{proof}


Finally, we prove Corollary~\ref{blowups}. 

\begin{proof}[Proof of Corollary~\ref{blowups}]
Since the blowup affects $E(\rho_1, \rho_2)$ only at finitely many points, after blowing up, there are still embedded holomorphic tori which are homotopically trivial. Then, $E(\rho_1, \rho_2) \cs m \overline{\CP^2}$ is of Calabi-Eckmann type.

Moreover, if $E(\rho_1,\rho_2) \cs m \overline {\CP^2}$ and $E(\rho_1',\rho_2') \cs m \overline {\CP^2}$ are biholomorphic, 
then it follows that $\rho_1=\rho_1'$ and $\rho_2=\rho_2'$ by the same argument in the proof of Theorem~\ref{HF} $(4)$. 
Since $E(\rho_1,\rho_2) \cs m \overline {\CP^2}$ is diffeomorphic to $m \overline {\CP^2}- \left\{p \right\}$, 
there are uncountably many distinct non-K\"{a}hler complex structures on $m \overline {\CP^2}- \left\{p \right\}$.

Finally, if $h$ is a holomorphic function on $E(\rho_1, \rho_2) \cs m \overline{\CP^2}$, it must be constant on the exceptional spheres, and so it factorizes through the blowup map $$\sigma \colon E(\rho_1, \rho_2) \cs m \overline{\CP^2} \to E(\rho_1, \rho_2).$$ Hence, $h$ is constant. 
This completes the proof. 
\end{proof}

\begin{remark}
Corollary~\ref{blowups} holds even for $m = \infty$, and the proof is essentially the same. By making the points for the blowups to vary, we get even more pairwise inequivalent complex structures on $E(\rho_1, \rho_2) \cs m\overline{\CP^2}$.
\end{remark}

\section*{Acknowledgements}
The authors are grateful to Professors Yukio Matsumoto, Ryushi Goto, and Ichiro Enoki.
Yukio Matsumoto gave us an important suggestion about Theorem~\ref{HF}.
Ryushi Goto and Ichiro Enoki gave us helpful advice for the model of the holomorphic elliptic fibration in the proof of Theorem~\ref{HF}.
Part of this article has been written when the second and the third authors were visiting the Department of Mathematical Sciences of Durham University, UK. So, they are grateful to Durham University for hospitality. The third author also thanks the Grey College of Durham University for hospitality during his stay at Durham. We are also thankful to Professor Wilhelm Klingenberg for helpful conversations and for having invited us to Durham University.

Antonio J. Di Scala is member of the Italian PRIN 2010-2011 ``Variet\`a reali e complesse: geometria, topologia e analisi armonica" and member of GNSAGA of INdAM. Daniele Zuddas is member of GNSAGA of INdAM.


\begin{thebibliography}{10}


\bibitem{APZ2013}
N. Apostolakis, R. Piergallini, and D. Zuddas, \emph{Lefschetz fibrations over the disc}, Proc. Lond. Math. Soc. {\bf 107} (2013), no. 2, 340--390.

\bibitem{CE53}
E. Calabi and B. Eckmann, \emph{A class of compact, complex manifolds which are not algebraic}, Ann. of Math. {\bf 58} (1953) 494-500.

\bibitem{SKZ15}
A. Di Scala, N. Kasuya, D. Zuddas, \emph{On embeddings of almost complex manifolds in almost complex Euclidean spaces}, arXiv:1410.1321 (2014).

\bibitem{DSV2010}
A. J. Di Scala and L. Vezzoni, \emph{Complex submanifolds of almost complex Euclidean spaces}, Q. J. Math. {\bf 61} (2010) no. 4, 401--405.

\bibitem{DZ11}
A. J. Di Scala and D. Zuddas, \emph{Embedding almost-complex manifolds in almost-complex Euclidean spaces}, Journal of Geometry and Physics {\bf 61} (2011) 1928--1931.


\bibitem{GS99}
R.\ E.\ Gompf, A.\ I.\ Stipsicz, {\em {$4$}-manifolds and {K}irby calculus}, {Graduate Studies in Mathematics} 20, American Mathematical Society, 1999.

\bibitem{Ho48}
H. Hopf, {\em Zur Topologie der komplexen Mannigfaltigkeiten}, Studies and Essays Presented to R. Courant on his 60th Birthday, Interscience Publishers, 1948, 167--185.

\bibitem{Kas14}
N. Kasuya, \emph{On contact submanifolds of the odd dimensional Euclidean spaces}, PhD thesis, The University of Tokyo (2014). 

\bibitem{Kas15}
N. Kasuya, \emph{An obstruction for codimesnion two contact embeddings in the odd dimensional Euclidean spaces}, Journal of the Mathematical Society of Japan, to appear.

\bibitem{Kod63}
K.\ Kodaira, \emph{On Compact Analytic Surfaces: II}, Ann. of Math. Vol. 77, No. 3 (1963), 563-626.

\bibitem{Mat82}
Y.\ Matsumoto, \emph{On $4$-manifolds fibered by tori}, Proc. Japan Acad. Ser. A Math. Sci. {\bf 58} (1982), no. 7, 298--301.

\end{thebibliography}
\end{document}